\numberwithin{equation}{section}
  \newtheorem{theorem}{Theorem}[section]
  \newtheorem{corollary}[theorem]{Corollary}
\title[Ricci solitons on a family of  three dimensional Lorentzian Walker manifolds]{Ricci solitons on a family of  three dimensional Lorentzian Walker manifolds}
\author{A. Diatta *,  M. Ciss** , A. S. Diallo***}
\newcommand{\acr}{\newline\indent}
\address{\llap{*\,} Universit\'e Assane Seck,\acr
UFR ST, D\'epartement de Math\'ematiques,\acr
Laboratoire  Math\'ematiques et Application  (LMA),\acr
 B. P. 523, Ziguinchor, S\'en\'egal}
\email{a.diatta20160578@zig.univ.sn}
\address{\llap{***\,} Universit\'e Alioune Diop,\acr
UFR SATIC, D\'epartement de Math\'ematiques,\acr
\'Equipe de Recherche en Analyse Non Lin\'eaire et G\'eom\'etrie (ANLG),\acr
 B. P. 30, Bambey, S\'en\'egal}
\email{mamadou.ciss@uadb.edu.sn}
\address{\llap{***\,} Universit\'e Alioune Diop,\acr
UFR SATIC, D\'epartement de Math\'ematiques,\acr
\'Equipe de Recherche en Analyse Non Lin\'eaire et G\'eom\'etrie (ANLG),\acr
B. P. 30, Bambey, S\'en\'egal}
\email{abdoulsalam.diallo@uadb.edu.sn}
\subjclass[2010]{53C20, 53C21}
\keywords{Einstein manifolds, Ricci soliton, Walker manifolds.}
\begin{document}

\begin{abstract}  
A Ricci soliton is a natural generalization of an Einstein metric. On a 
pseudo-Riemannian manifold $(M, g)$, it is defined by  :
$\mathcal{L}_X g + \rho = \lambda \cdot g$, where $X$ is a smooth vector field
on $M$, $\mathcal{L}_{X}$ denotes the Lie derivative in the direction of $X$, 
$\rho$ is the Ricci tensor, and $\lambda$ is a real constant. In this paper, 
we establish the existence of non-trivial Ricci solitons on a family of 
three-dimensional Lorentzian Walker manifolds.
\end{abstract}

\maketitle

\section{Introduction}

\noindent
Ricci solitons were first investigated in Riemannian and Lorentzian signature.
They have also been studied recently in a more general context by several 
authors.  Batat et al. \cite{Batat2011} show that an $n$-dimensional
Lorentzian manifold whose isometry group is of dimension at least 
$\frac{1}{2} n(n-1)+1$ admit different vector fields resulting in expanding, 
steady and shrinking Ricci solitons. Brozos-V\'azquez et al. \cite{Brozos2012}, 
analyze the existence of three-dimensional Lorentzian homogeneous Ricci 
solitons, showing the existence of shrinking, expanding and steady Ricci 
solitons in that setting.  Also, Calvaruso and De Leo \cite{CalvarusoDeLeo2011} 
give some examples of Ricci solitons within the family of three-dimensional 
Walker manifolds. Pirhadi, Fasihi-Ramandi and Azami \cite{PirhadiFasihiRamandiAzami2023} 
have generalized the Ricci soliton equation on the three-dimensional 
Lorentzian Walker manifolds.\\

A pseudo-Riemann manifold $(M, g)$ that admits a totally isotropic distribution 
field, parallel to the Levi-Civita connection, is a Walker manifold.  In dimension $3$, 
they form an important class of Lorentzian manifolds possessing a parallel isotropic vector field. Thanks to their explicit local form, they allow the study of delicate 
questions in differential geometry or physical models in relativity. 
Calvaruso \cite{Calvaruso2007} classified all homogeneous Lorentzian manifolds of dimension three. Calvaruso and Kowalski \cite{CalvarusoKowalki2009} identified 
the possible forms of the Ricci operator for locally homogeneous Lorentzian 
manifolds in dimension three.\\

Motivated, by the above works \cite{Brozos2012, CalvarusoDeLeo2011}, we study 
Ricci solitons on Walker three-manifolds $(M, g)$. For several classes of these
manifolds, described in terms of a deﬁning function $f$, existence results 
are obtained. We organize the paper as follows :  in section \ref{Description}, 
we shall describe the curvature of the metric considered. In section \ref{Soliton}, 
we study and characterize Ricci solitons via a system of partial differential 
equations involving the function $f$.

\section{Description of the metric}\label{Description}

The geometry of Walker three-manifolds  has been studied in \cite{Brozos2009}. 
These manifolds are described in terms of a suitable system of local coordinates 
$(t, x, y)$ and form a large class, depending on an arbitrary function 
$f = f (t, x, y)$. The special case of Lorentzian three-manifolds admitting 
a parallel null vector ﬁeld strictly Walker three-manifolds is characterized 
by the fact that $f = f (x, y)$. \\

We consider the Walker metric  $g_{f}$ on $\mathcal{O} \subset \mathbb{R}^3$ 
given by:
\begin{eqnarray}\label{eq2.1} 
g _f= \left( \begin{array}{cccc}
0&0&1\\
0&\varepsilon&0\\
1&0&f(t,x,y)
\end{array}
\right),
\end{eqnarray}
where $\varepsilon=\pm 1$, $f(t,x,y) = a(t,x)y^2 +b(t,x) y + d(t,x)$ and we study 
the existence of non-trivial (i.e not Einstein) Ricci solitons on Walker three-manifolds
$(M, g _f)$. A straightforward calculation shows that the non-zero components  
of the Levi-Civita connection of the metric (\ref{eq2.1}) are given by :
\begin{eqnarray}\label{eq2.2}
\nabla_{\partial_t}\partial_y &=& \frac{1}{2} f_t \partial_t, \; 
\nabla_{\partial_x}\partial_y = \frac{1}{2} f_x \partial_t, \;
 \nonumber \\
\nabla_{\partial_y}\partial_y &=& \frac{1}{2} \left( ff_t  + f_y\right)  
\partial_t 
 - \frac{1}{2\varepsilon} f_x\partial_x 
- \frac{1}{2} f_t\partial_y.
\end{eqnarray}   
Using (\ref{eq2.2}), we can completely determine the curvature tensor
of the metric (\ref{eq2.1}) by the following formula:
$\mathcal{R}(\partial_i, \partial_j)\partial_k = ([\nabla_{\partial_i}, 
\nabla_{\partial_j}]  - \nabla_{[\partial_i, \partial_j]}) \partial_k$. Then, 
taking into account (\ref{eq2.1}), we can determine all components 
of the $(0,4)$-curvature tensor : 
$R_{ijkl}=g_{a}(\mathcal{R}(\partial_i,\partial_j)\partial_k, \partial_l)$. 
We obtain that the non-zero components of the $(0,4)$-curvature tensor 
of the metric (\ref{eq2.1}) are given by :
\begin{eqnarray}\label{eq2.3}
R(\partial_x,\partial_y)\partial_x&=& -\frac{1}{2} f_{xx}\partial_t, \;
R(\partial_t,\partial_y)\partial_t =-\frac{1}{2} f_{tt}\partial_t, \nonumber \\
R(\partial_t,\partial_y)\partial_x &=& -\frac{1}{2} f_{tx}\partial_t , \;
R(\partial_x,\partial_y)\partial_t = -\frac{1}{2} f_{tx}\partial_t ,\nonumber \\
R(\partial_t,\partial_y)\partial_y&=& -\frac{1}{2} ff_{tt}\partial_t+\frac{1}{2\varepsilon} f_{tx}\partial_x+\frac{1}{2} ff_{tt}\partial_y ,\nonumber \\
R(\partial_x,\partial_y)\partial_y&=& -\frac{1}{2} ff_{tx}\partial_t+\frac{1}{2\varepsilon} f_{xx}\partial_x+\frac{1}{2} ff_{tx}\partial_y .\nonumber \\
\end{eqnarray}
By (\ref{eq2.3}), we can calculate the components $\rho_{ij}$ with respect 
to $\partial_i$ of the Ricci tensor of the metric (\ref{eq2.1}). We find that the 
non-zero components of the Ricci tensor  are given by :
\begin{eqnarray}\label{eq2.4}
\rho_{yy} = \frac{1}{2\varepsilon} \left(\varepsilon ff_{tt}-f_{xx}\right) , \;\;
\rho_{xy} = \frac{1}{2} f_{tx}, \;\;
\rho_{ty} = \frac{1}{2} f_{tt}.
\end{eqnarray}

\section{Ricci solitons on a family of three-dimensional Walker manifolds}\label{Soliton}

\noindent
In this section, we investigate the conditions for the existence of Ricci solitons 
on Walker three-manifolds $(M, g_f)$ where $g_f$ is given by (\ref{eq2.1}) . \\

A Ricci soliton is a natural generalization of an Einstein metric. It is defined 
on a pseudo-Riemannian manifold $(M, g)$ by:
\begin{eqnarray}\label{eq3.1} 
\mathcal{L}_X g + \rho = \lambda \cdot g,
\end{eqnarray}
where $X$ is a smooth vector field on $M$,  $\mathcal{L}_X$ denotes the
Lie derivative in the direction of $X$, $\rho$ is the Ricci tensor and $\lambda$
is a real number. The Ricci soliton is said to be shrinking, steady or expanding
according to whether $\lambda > 0$, $\lambda =0$, or $\lambda < 0$,
respectively. Moreover, we say that a Ricci soliton is a gradient Ricci soliton
if it admits a vector field $X$ satisfying $X = \mathrm{grad} h$ for some
potential function $h$.\\

\noindent
The description of Ricci solitons can be regarded as a ﬁrst step in understanding 
the Ricci flow, since they are the ﬁxed points of the ﬂow. Moreover, they
are important in understanding singularities of the Ricci ﬂow \cite{Brozos2012}.\\
 
Let  $X = A\partial_t+ B\partial_x + C \partial_y $ be a vector field on $(M, g_f)$, 
where $A, B, C$ are smooth functions of variables $t, x, y$. The  non-zero 
components of the Lie derivative of the Walker metric (\ref{eq2.1}) in the direction 
of $X$ are :
\begin{eqnarray}\label{eq3.2}
(\mathcal{L}_X g_f)(\partial_t,\partial_t) &=& 2C_t , \nonumber \\
(\mathcal{L}_X g_f)(\partial_t,\partial_x) &=& (\mathcal{L}_X g_f)(\partial_x,\partial_t) 
= \varepsilon B_t +C_x, \nonumber \\
(\mathcal{L}_X g_f)(\partial_t,\partial_y) &=& (\mathcal{L}_X g_f)(\partial_y,\partial_t) 
= A_t +C_y +fC_t ,  \nonumber \\
(\mathcal{L}_X g_f)(\partial_x,\partial_x) &=& 2\varepsilon B_x, \nonumber \\
(\mathcal{L}_X g_f)(\partial_x,\partial_y) &=& (\mathcal{L}_X g_f)(\partial_y,\partial_x)
= \varepsilon B_y +fC_x +A_x, \nonumber \\
(\mathcal{L}_X g_f)(\partial_y,\partial_y) &=& X(f)+2fC_y+2A_y. 
\end{eqnarray}

 Our first main result is the following:

\begin{theorem}
Let $(M, g_f)$ be a three-dimensional Walker manifold, where $g_f$ is the metric 
given by \eqref{eq2.1} and $f$ is the defining function given by 
$f(t,x,y) = a(t,x) y^{2} + b(t,x) y + d(t,x)$.  Then $(M, g_f, X, \lambda)$ is a Ricci soliton 
if equation \eqref{eq3.1} is satisfied by $X = A\partial_t+ B\partial_x + C \partial_y $ 
where $A, B $ and $C$ are given by :
\begin{eqnarray}\label{eq3.3}
A(t,x, y) &=& \left( \lambda-K_y(y)\right)t + \varepsilon H_{y}(y)
- \frac{1}{2}a_{t}(t)y^2 +N(x,y), \nonumber\\ 
B(t,x,y) &=&\frac{\lambda}{2}x + H(t,y),\nonumber\\
C(t,x,y)&=& -\varepsilon H_{t}(y)x+K(y),
\end{eqnarray}
and the functions $H, K, E$ satisfy the following conditions :
\begin{align*}
&\left( \frac{1}{2}a_{tx}(t,x)	-\varepsilon aH_{t}(t,y)\right) y^{2}
+ \left(  \frac{1}{2}b_{tx}(t,x)-\varepsilon bH_{t}(t,y)\right) y\nonumber\\
& + \frac{1}{2}d_{tx}(t,x)-\varepsilon dH_{t}(t,y)+\varepsilon H_{y}(t,y) + N_{x}(x,y) = 0,
\end{align*} 
and
\begin{align*}
&&\left(-\varepsilon H_{t}(y)x + K(y)\right) f_y + f\left( -2\varepsilon H_{ty}(y)x
+ 2K_y(y) + \lambda + \frac{1}{2}f_{tt}\right)\nonumber  \\
&& + \left( ( \lambda - tK_y(y)) + \varepsilon H_{y}(y) - \frac{1}{2}a_{t}(t)y^2 
+ N(x,y)\right) f_t\nonumber\\
&& + 4\left(\varepsilon H_{yy}(y) + N_y(x,y) - tK_{yy}(y) - a_ty \right) 
- \frac{1}{2\varepsilon}f_{xx}=0.
\end{align*}
\end{theorem}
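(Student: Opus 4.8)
The plan is to substitute the ansatz $X = A\partial_t + B\partial_x + C\partial_y$ into the Ricci soliton equation \eqref{eq3.1} and compare components against the coordinate frame $\{\partial_t,\partial_x,\partial_y\}$. Using the metric \eqref{eq2.1}, the Ricci components \eqref{eq2.4} and the Lie derivative components \eqref{eq3.2}, the single tensor equation \eqref{eq3.1} is equivalent to the six scalar equations
\begin{align*}
&C_t = 0, \qquad \varepsilon B_t + C_x = 0, \qquad B_x = \tfrac{\lambda}{2},\\
&A_t + C_y + fC_t + \tfrac12 f_{tt} = \lambda,\\
&A_x + \varepsilon B_y + fC_x + \tfrac12 f_{tx} = 0,\\
&X(f) + 2A_y + 2fC_y + \tfrac12 ff_{tt} - \tfrac{1}{2\varepsilon}f_{xx} = \lambda f,
\end{align*}
where $X(f) = Af_t + Bf_x + Cf_y$. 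Thus the whole problem reduces to analysing this system.

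I would then integrate the four ``lower order'' equations in this order. The first gives $C = C(x,y)$; the third gives $B = \tfrac{\lambda}{2}x + H(t,y)$ for some function $H$; feeding $B$ into the second yields $C_x = -\varepsilon H_t(t,y)$, hence $C = -\varepsilon x H_t(t,y) + K(y)$, and compatibility with $C_t = 0$ forces $H_{tt} = 0$ (so $H_t$ is a function of $y$ alone) and $K = K(y)$. The fourth equation then fixes the $t$-dependence of $A$ by one further integration, producing the last free function $N(x,y)$; inserting the quadratic form $f = a y^2 + by + d$ and collecting the terms, one obtains precisely the expressions \eqref{eq3.3} for $A$, $B$, $C$. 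In particular the displayed form of $X$ is not an extra hypothesis but is forced by the ``easy'' part of the soliton system.

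It remains to substitute \eqref{eq3.3} into the two equations not yet used, namely the $(\partial_x,\partial_y)$ and $(\partial_y,\partial_y)$ components. Since $f$ is a polynomial of degree two in $y$, each of these is a polynomial identity in $y$ whose coefficients are expressions in $t,x$ and in $H,K,N$ together with their derivatives; grouping the terms according to the powers $y^2, y^1, y^0$ reproduces exactly the two stated conditions on $H$, $K$, $N$. Conversely, if $A$, $B$, $C$ are given by \eqref{eq3.3} and those two conditions hold, then all six scalar equations are satisfied, so \eqref{eq3.1} holds and $(M, g_f, X, \lambda)$ is a Ricci soliton.

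The main obstacle is the $(\partial_y,\partial_y)$ equation: it is the only one that couples all three components of $X$ simultaneously, through $X(f) = Af_t + Bf_x + Cf_y$, while also involving $f$, $f_{tt}$ and $f_{xx}$, so the bookkeeping — expanding everything as a polynomial in $y$ and keeping track of which primitive each integration step has introduced — is where all the care is needed. One must also feed the integrability constraints uncovered along the way (such as $H_{tt} = 0$) back into the ansatz consistently, so that \eqref{eq3.3} is neither over- nor under-determined.
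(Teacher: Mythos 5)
Your proposal is correct and follows essentially the same route as the paper: derive the six scalar component equations, integrate the conditions $C_t=0$, $B_x=\lambda/2$, $\varepsilon B_t+C_x=0$ and the $(\partial_t,\partial_y)$ equation to obtain the forms of $A$, $B$, $C$ in \eqref{eq3.3}, then substitute into the remaining $(\partial_x,\partial_y)$ and $(\partial_y,\partial_y)$ components to produce the two stated conditions on $H$, $K$, $N$. Your explicit remark that $H_{tt}=0$ is forced by compatibility of $C_t=0$ with $C_x=-\varepsilon H_t$ is a small clarification the paper leaves implicit, but it does not change the argument.
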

	
\begin{proof}
Let $f(t,x,y)=a(t,x) y^{2}+b(t,x) y+d(t,x)$ and 
$X = A\partial_t+ B\partial_x + C \partial_y $ in $(M, g_f)$.  Using (\ref{eq2.1}),  
\eqref{eq3.1} and (\ref{eq3.2}), a long  calculation gives that three-dimensional 
Walker manifold $(M,g_f)$ is a Ricci soliton if and only if the following 
conditions hold :
\begin{itemize}
\item (C1) $C_t= 0$,  
\item (C2) $\varepsilon B_t +C_x= 0$,  
\item (C3) $A_t +C_y +fC_t+\frac{1}{2}f_{tt}=\lambda$,  
\item (C4) $2\varepsilon B_x=\varepsilon\lambda$,  
\item (C5) $\varepsilon B_y +fC_x +A_x+\frac{1}{2}f_{tx}=0$,  
\item (C6) $ X(f) + 2\left( fC_y + 2A_y\right) 
+ \frac{1}{2\varepsilon}\left( \varepsilon ff_{tt}-f_{xx}\right) =\lambda f$. 
\end{itemize}
The condition (C4) yields the following form : 
\begin{equation}\label{eq3.4}
B(t,x,y) =\frac{\lambda}{2}x + H(t,y),
\end{equation}
where	$H \in C^{\infty}(M)$. From the condition (C1), $C(t,x,y)=C(x,y)$. Then, using 
\eqref{eq3.4} in the condition (C2), we have :
\begin{equation}\label{eq3.5}
C(t,x,y)= -\varepsilon H_{t}(y)x+K(y),
\end{equation}  
where $K \in C^{\infty}(M)$. Let us substitute \eqref{eq3.4} and \eqref{eq3.5} 
in the condition (C3), we obtain :
\begin{equation}\label{eq3.6}
A(t,x, y) = \left( \lambda - K_y(y)\right)t + \varepsilon H_{y}(y)
- \frac{1}{2}a_{t}(t)y^2 +N(x,y),
\end{equation}
where $N \in C^{\infty}(M)$. Using \eqref{eq3.4}, \eqref{eq3.5} and \eqref{eq3.6} 
in the condition (C5), we get :
\begin{align*}
&\left( \frac{1}{2}a_{tx}(t,x)	- \varepsilon aH_{t}(t,y)\right) y^{2}
+ \left(  \frac{1}{2}b_{tx}(t,x) - \varepsilon bH_{t}(t,y)\right) y\nonumber\\
& + \frac{1}{2}d_{tx}(t,x)-\varepsilon dH_{t}(t,y)+\varepsilon H_{y}(t,y) + N_{x}(x,y)=0.
\end{align*}
By definition, we have :
\begin{eqnarray*}
X(f) &=& A(t,x,y) f_t + B(t,x,y) f_x + C(t,x,y) f_y.
\end{eqnarray*}
Thus, by the condition (C6), we have  :
\begin{align*}
A(t,x,y)f_t & + B(t,x,y)f_x + C(t,x,y) f_y\\
& + 4A_y - \frac{1}{2\varepsilon}f_{xx}  + f\left( 2C_y +\lambda
+ \frac{1}{2}f_{tt}\right)  =0.
\end{align*}
Therefore
\begin{align*}
&& \left(-\varepsilon H_{t}(y)x + K(y)\right) f_y + f\left( -2\varepsilon H_{ty}(y)x
+ 2K_y(y) +\lambda+  \frac{1}{2}f_{tt}\right)\nonumber  \\
&& + \left( ( \lambda - tK_y(y)) + \varepsilon H_{y}(y) - \frac{1}{2}a_{t}(t)y^2 
+ N(x,y)\right) f_t\nonumber\\
&& + 4\left(\varepsilon H_{yy}(y) + N_y(x,y) - tK_{yy}(y) - a_ty \right) 
-\frac{1}{2\varepsilon}f_{xx}=0.
		\end{align*}	
This complete the proof of the Theorem.
\end{proof}

We have the following consequences of the main result:

\begin{corollary}
Let $(M, g_f)$ be a Walker manifold three-dimensional where $g_f$ denotes 
the metric defined by (\ref{eq2.1}) and $f$ is the defining function given by 
$f(t,x,y)=a(t) y^{2}+b(t) y+d$. Then $(M, g_f, X, \lambda)$ is a Ricci soliton 
if equation \eqref{eq3.1} is satisfied by 
$X = A\partial_t+ B\partial_x + C \partial_y $ where $A, B $ and $C$ are given by :
\begin{eqnarray}\label{eq3.7}
A(t,x, y)&=& \left( \lambda - K_y(y)\right)t + \varepsilon H_{y}(y)
 - \frac{1}{2}a_{t}(t)y^2 + N(x,y), \nonumber\\ 
 B(t,x,y) &=& \frac{\lambda}{2}x + H(t,y),\nonumber\\
 C(t,x,y)&=& -\varepsilon H_{t}(y)x+K(y),
\end{eqnarray} 
and the functions $H, K, E$ satisfying the following conditions :
\begin{equation*}
\varepsilon H_{y}(t,y)+N_{x}(x,y)-\varepsilon fH_{t}(t,y)=0,
\end{equation*}
and
\begin{align*}
&&\left(K(y) - \varepsilon H_{t}(y)x\right) f_y + f\left(2K_y(y)
- 2\varepsilon H_{ty}(y)x +\lambda + \frac{1}{2}f_{tt}\right)\nonumber  \\
&& + \left( \left( \lambda - K_y(y)\right)t + \varepsilon H_{y}(y)
- \frac{1}{2}a_{t}(t)y^2 +N(x,y)\right) f_t\nonumber\\
&& + 4\left( N_y(x,y) + \varepsilon H_{yy}(y) - K_{yy}(y)t
- a_t(t)y \right) = 0.
\end{align*}
\end{corollary}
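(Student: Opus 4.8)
The plan is to obtain the Corollary as the special case of the Theorem corresponding to $a(t,x)=a(t)$, $b(t,x)=b(t)$ and $d(t,x)=d$ constant. First I would record what these hypotheses force on the defining function: since none of $a$, $b$, $d$ depends on $x$, the function $f=a(t)y^{2}+b(t)y+d$ satisfies $f_{x}=f_{xx}=f_{tx}=0$, and the mixed second derivatives vanish, $a_{tx}=b_{tx}=d_{tx}=0$ (indeed $d_{t}=0$ as well). The expressions \eqref{eq3.3} for $A$, $B$, $C$ are structurally unchanged under this specialization, which already yields the formulas \eqref{eq3.7}; so the whole content of the Corollary is the simplification of the two PDE conditions of the Theorem.

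Next I would substitute the vanishing derivatives into the first condition of the Theorem. The coefficient of $y^{2}$ collapses to $-\varepsilon a H_{t}$, the coefficient of $y$ to $-\varepsilon b H_{t}$, and the $y$-free part to $-\varepsilon d H_{t}+\varepsilon H_{y}+N_{x}$. Collecting the three terms proportional to $H_{t}$ and using $a y^{2}+b y+d=f$ gives exactly $\varepsilon H_{y}(t,y)+N_{x}(x,y)-\varepsilon f H_{t}(t,y)=0$, which is the first condition of the Corollary.

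Then I would substitute into the second condition of the Theorem. The only term affected is the last one, $-\tfrac{1}{2\varepsilon}f_{xx}$, which now vanishes identically; every other term is already displayed in the form appearing in the Corollary (with $f_{t}=a_{t}y^{2}+b_{t}y$ and $f_{y}=2ay+b$ left implicit, as in the statement). This reproduces the second condition of the Corollary verbatim, completing the argument.

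I do not expect a genuine obstacle: the Corollary is a direct corollary, and the only care required is bookkeeping — checking that it is precisely the constancy of $d$ (not merely $d=d(t)$) together with the $x$-independence of $a$ and $b$ that make $f_{xx}$, $f_{tx}$ and the mixed partials $a_{tx}$, $b_{tx}$, $d_{tx}$ disappear, so that the two long conditions collapse to the two short ones. As an alternative one could bypass the Theorem and re-run the reduction of the system \textrm{(C1)}--\textrm{(C6)} from the proof of the Theorem for this particular $f$; but invoking the Theorem is cleaner and avoids repeating the integration steps.
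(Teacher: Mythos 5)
Your argument is correct, but it follows a genuinely different route from the paper's. The paper does \emph{not} deduce the Corollary from the Theorem: it re-runs the whole reduction for the particular $f(t,x,y)=a(t)y^{2}+b(t)y+d$, writing out the soliton system (C'1)--(C'6) afresh (with the curvature terms $f_{xx}$, $f_{tx}$ already absent), re-integrating (C'1)--(C'4) to recover $A$, $B$, $C$, and then reading off the two residual conditions from (C'5) and (C'6), using that $X(f)=Af_{t}+Cf_{y}$ since $f_{x}=0$. You instead treat the Corollary as a pure specialization of the Theorem: you observe that $a_{tx}=b_{tx}=d_{tx}=0$ collapses the three $H_{t}$-terms of the first Theorem condition into $-\varepsilon fH_{t}$ via $ay^{2}+by+d=f$, and that $f_{xx}=0$ kills the last term of the second condition, while the formulas for $A$, $B$, $C$ carry over unchanged. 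Your route is shorter and makes the logical dependence on the Theorem explicit, at the cost of inheriting whatever is in the Theorem's statement verbatim (including its small typographical inconsistencies, e.g.\ $tK_{y}$ versus $K_{y}t$ and the omission of the $Bf_{x}$ term, which is harmless here precisely because $f_{x}=0$ but which the paper's self-contained re-derivation sidesteps). The paper's route buys an independent check of the computation for this subfamily; yours buys economy. Both are valid.
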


\begin{proof}
Let $f(t,x,y)=a(t) y^{2}+b(t) y+d$ and 
$X = A\partial_t+ B\partial_x + C \partial_y $ in $(M, g_f)$.  Using (\ref{eq2.1}),  \eqref{eq3.1} and (\ref{eq3.3}) a standard calculation gives that three-dimensional 
Walker manifold $(M,g_f)$ is a Ricci soliton if and only if the following conditions 
hold :
\begin{itemize}
\item (C'1) $C_t= 0$,  
\item (C'2) $\varepsilon B_t + C_x = 0$,  
\item (C'3) $A_t + C_y + fC_t + \frac{1}{2}f_{tt} = \lambda$,  
\item (C'4) $2\varepsilon B_x = \varepsilon\lambda$,  
\item (C'5) $ A_x +\varepsilon B_y + fC_x=0$,  
\item (C'6) $ X(f) + 2\left( fC_y + 2A_y\right) + \frac{1}{2}ff_{tt} =\lambda f$. 
\end{itemize}
Using conditions (C'1), (C'2), (C'3) and (C'4), we obtain the functions $A, B$ 
and $C$ in (\ref{eq3.7}). Applying \eqref{eq3.7} in the condition (C'5), we 
obtain :
\begin{equation*}\
\varepsilon H_{y}(t,y) + N_{x}(x,y) - \varepsilon fH_{t}(t,y)=0.
\end{equation*}
Since
\begin{eqnarray*}
X(f)&=&A(t,x,y)f_t+C(t,x,y)f_y,
\end{eqnarray*}
thus, applying $X(f)$ in the condition (C'6), we have :
 \begin{eqnarray*}
 A(t,x,y)f_t + C(t,x,y)f_y + 4A_y  + f\left( 2C_y +\lambda + \frac{1}{2}f_{tt}\right)   =0.\end{eqnarray*}
 Therefore
\begin{align*}
 && \left(K(y) - \varepsilon H_{t}(y)x\right) f_y + f\left(2K_y(y) - 2\varepsilon H_{ty}(y)x 
 + \lambda +  \frac{1}{2}f_{tt}\right)\nonumber  \\
 &&+ \left( \left( \lambda - K_y(y)\right)t + \varepsilon H_{y}(y) 
 - \frac{1}{2}a_{t}(t)y^2 + N(x,y)\right) f_t\nonumber\\
&&+4\left( N_y(x,y) + \varepsilon H_{yy}(y) - K_{yy}(y)t
- a_t(t)y \right) = 0.
 \end{align*}
 \end{proof}

 \begin{corollary}
Let $(M, g_f)$ be a flat Walker manifold three-dimensional, where $g_f$ 
denotes the metric defined by (\ref{eq2.1}) and $f$ is the defining function 
given by $f(y)=\alpha y^{2}+\beta y+\gamma$. Then$(M, g_f, X, \lambda)$ 
is a Ricci soliton  if equation \eqref{eq3.1} is satisfied by 
$X = A\partial_t+ B\partial_x + C \partial_y $ where $A, B $ and $C$ are given by :
\begin{eqnarray}\label{eq3.8}
A(t,x, y) &=& - \frac{\beta}{2} \left( \frac{\lambda}{4}y+ \delta\right)y  
		+ \frac{\lambda }{2} t +F(x,y), \nonumber\\ 
B(t,x,y) &=&\frac{\lambda}{2}x+H(t,y),\nonumber  \\ 
C(t,x,y) &=& -\varepsilon H_{t}(y)x+K(y),
\end{eqnarray}
and the functions $H,K,F$ satisfy the following conditions :
\begin{equation*}
2H_y(y) - \varepsilon fH_t(y) + F_x(x,y)=0,
\end{equation*}
and
\begin{eqnarray*}
\left(K(y) - \varepsilon H_{t}(y)x\right)f_y &+& 2\varepsilon H_{yy}(y)x - 2K_{yy}(y)t \\
	&+& 2F_{y}(x,y) + f\left( 2K_{y}(y) + \lambda\right) =0. 
\end{eqnarray*}
\end{corollary}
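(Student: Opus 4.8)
The plan is to derive this statement as the special case of the Theorem (equivalently of the preceding Corollary) in which the defining function depends on $y$ alone, so that $f_t=f_x=0$ and consequently $f_{tt}=f_{tx}=f_{xx}=0$.

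The first step is the geometric simplification. When $f=f(y)$ every component of the $(0,4)$-curvature tensor listed in \eqref{eq2.3} carries a factor $f_{tt}$, $f_{tx}$ or $f_{xx}$ and therefore vanishes, so $(M,g_f)$ is flat; in particular the Ricci tensor \eqref{eq2.4} is identically zero. Hence the soliton equation \eqref{eq3.1} reduces to $\mathcal{L}_X g_f=\lambda g_f$, i.e. $X$ must be a homothetic vector field of $g_f$. This is the meaning of the word ``flat'' in the hypothesis, and it is why the term $\tfrac12 ff_{tt}-\tfrac{1}{2\varepsilon}f_{xx}$ disappears from condition (C6).

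Next I would specialize the system (C1)--(C6) obtained in the proof of the Theorem. Setting $f_t=f_x=f_{tt}=f_{tx}=f_{xx}=0$ (and using $X(f)=Af_t+Bf_x+Cf_y=Cf_y$) it becomes
\begin{align*}
&C_t=0,\qquad \varepsilon B_t+C_x=0,\qquad A_t+C_y=\lambda,\\
&B_x=\tfrac{\lambda}{2},\qquad \varepsilon B_y+fC_x+A_x=0,\qquad Cf_y+2fC_y+4A_y=\lambda f.
\end{align*}
Integrating the first four equations in the order $B_x=\tfrac{\lambda}{2}$, $C_t=0$, $\varepsilon B_t+C_x=0$, $A_t+C_y=\lambda$ --- exactly as in the Theorem --- gives $B=\tfrac{\lambda}{2}x+H(t,y)$ and $C=-\varepsilon H_t(y)x+K(y)$, the normalizations $H_{tt}=0$, $H_{ty}=0$ and $K_y(y)=\tfrac{\lambda}{2}$ being what make the $t$- and $x$-coefficients consistent and collapse the linear-in-$t$ part of $A$ to $\tfrac{\lambda}{2}t$; solving for the remaining $y$-dependence of $A$ against the residual ordinary differential equation coming from (C6), with $f=\alpha y^{2}+\beta y+\gamma$, produces the explicit particular piece $-\tfrac{\beta}{2}\bigl(\tfrac{\lambda}{4}y+\delta\bigr)y$ and hence the form \eqref{eq3.8}. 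Substituting \eqref{eq3.8} into $\varepsilon B_y+fC_x+A_x=0$ yields the first displayed compatibility condition on $H,K,F$, while substituting it into $Cf_y+2fC_y+4A_y=\lambda f$ and noting that $2fC_y=\lambda f$ cancels the right-hand side yields the second. Conversely, for any $X$ of the form \eqref{eq3.8} whose data $H,K,F$ satisfy these two equations, all of (C1)--(C6) hold, so $\mathcal{L}_X g_f=\lambda g_f$, and since $\rho\equiv 0$ we get $\mathcal{L}_X g_f+\rho=\lambda g_f$; thus $(M,g_f,X,\lambda)$ is a Ricci soliton.

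No new idea is needed beyond the Theorem; the only delicate point is the bookkeeping of the several integration functions. One has to verify that the ansatz \eqref{eq3.8} is internally consistent --- i.e. that the hidden constraints $H_{tt}=0$, $H_{ty}=0$, $K_y=\tfrac{\lambda}{2}$ imposed by (C1)--(C3) are compatible with the $x$- and $t$-dependence written there --- and that $-\tfrac{\beta}{2}\bigl(\tfrac{\lambda}{4}y+\delta\bigr)y$ really is a particular solution of the inhomogeneous equation for $A_y$ extracted from (C6) when $f=\alpha y^{2}+\beta y+\gamma$. Once this is arranged, everything else is a routine substitution of $f$ and of $f_t=f_x=f_{tt}=f_{tx}=f_{xx}=0$ into the system already established, so I expect the coefficient-matching in (C5) and (C6) to be the most error-prone but entirely mechanical part.
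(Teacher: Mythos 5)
Your overall strategy is exactly the paper's: observe that $f=f(y)$ kills all second derivatives $f_{tt},f_{tx},f_{xx}$ (hence the Ricci tensor, hence the manifold is flat and \eqref{eq3.1} reduces to the homothety equation $\mathcal{L}_Xg_f=\lambda g_f$), then specialize the system (C1)--(C6) and integrate in the order (C4), (C1), (C2), (C3), leaving (C5) and (C6) as the two compatibility conditions. The paper does precisely this (its conditions (C''1)--(C''6)), with one extra step you omit but which is needed to write $C=-\varepsilon H_t(y)x+K(y)$ and $C_y=K_y$: it cross-differentiates (C''3) and (C''5) to deduce $H_{ty}=0$.

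There is, however, a genuine gap in your treatment of $A$. From (C''3) one only gets $A_t=\lambda-C_y=\lambda-K_y(y)+\varepsilon H_{ty}x$, which integrates to $A=\varepsilon H_y(y)x+(\lambda-K_y(y))t+F(x,y)$ with $K$ \emph{completely free}; this is what the paper's proof actually obtains. Your claim that (C1)--(C3) force the ``normalizations'' $K_y=\lambda/2$ (so that the coefficient of $t$ collapses to $\lambda/2$) and that an ODE extracted from (C6) produces the particular piece $-\tfrac{\beta}{2}\bigl(\tfrac{\lambda}{4}y+\delta\bigr)y$ is unsubstantiated, and it is in fact inconsistent with the two displayed conditions you are trying to prove: those conditions retain $K_{yy}(y)t$ and $f\left(2K_y(y)+\lambda\right)$ as nontrivial terms, which would be forced to degenerate (to $0$ and $2\lambda f$ respectively) if $K_y\equiv\lambda/2$. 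Likewise your assertion that ``$2fC_y=\lambda f$ cancels the right-hand side'' of (C6) presupposes $C_y=\lambda/2$ and would eliminate the $f(\cdots)$ term from the final condition, which is not what the corollary asserts. (To be fair, the expression for $A$ in \eqref{eq3.8} is also not what the paper's own proof derives --- the proof ends with $A=\varepsilon H_y(y)x+(\lambda-K_y(y))t+F(x,y)$ --- so you should not expect to recover the first line of \eqref{eq3.8} from (C1)--(C6) alone; but inventing extra constraints to force it, as your proposal does, breaks the equivalence with the remaining conditions.)
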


\begin{proof}
Let $f(y)=\alpha y^{2}+\beta y+\gamma$ and 
$X = A\partial_t+ B\partial_x + C \partial_y $ in $(M, g_f)$.  Using (\ref{eq2.1}),  \eqref{eq3.1} and (\ref{eq3.3}), a standard calculation gives that three-dimensional 
Walker manifold $(M,g_f)$ is a Ricci soliton if and only if the following conditions hold :
\begin{itemize}
\item (C''1) $C_t= 0$,  
\item (C''2) $\varepsilon B_t +C_x= 0$,  
\item (C''3) $A_t +C_y +fC_t=\lambda$,  
\item (C''4) $2\varepsilon B_x=\varepsilon\lambda$,  
\item (C''5) $\varepsilon B_y +fC_x +A_x=0$,  
\item (C''6) $ X(f)+2\left( fC_y+2A_y\right)  =\lambda f$. 
\end{itemize}
From condition (C''1), we have: $C(t,x,y) = C(x,y)$. We establish the form of $B$ 
under condition (C"4) :
\begin{equation}\label{eq3.9}
B(t,x,y)= \frac{\lambda}{2}x + H(t,y),
\end{equation}
with $H \in C^{\infty}(M)$. We compute $B_t$ and substitute it into condition (C"2), 
we obtain : $\varepsilon H_{t}(t,y) + C_{x}=0$. Therefore
\begin{equation}\label{eq3.10}
	C(t,x,y)=-\varepsilon H_{t}(y)x + K(y),
\end{equation}
where $K\in C^{\infty}$. Using \eqref{eq3.10} in condition (C"3), we have :
\begin{eqnarray*}
A_t -\varepsilon H_{ty}(y)x + K_{y}(y) = \lambda,
\end{eqnarray*} 
and then
\begin{equation}\label{eq3.11}
A(t,x,y)=\varepsilon H_{y}(y)x+\left( \lambda-K_{y}(y)\right) t + F(x,y),
\end{equation}
where $F\in C^{\infty}(M)$. \\

We differentiate conditions (C"3) and (C"5) respectively with respect to $t$ and $x$, 
we get :
\begin{eqnarray*}
A_{tx} + C_{yx}=0 \quad \mbox{and}\quad
A_{xt} + f_{t}C_{x}+\varepsilon B_{yt} = 0.
\end{eqnarray*}
Hence
\begin{eqnarray*}
- C_{yx} + f_tC_x + \varepsilon B_{yt} = 0.
\end{eqnarray*}
Using \eqref{eq3.9} and \eqref{eq3.10}, we have : $ H_{yt}(y)=0$ and
so $H_{y}(y,t) = H_{y}(y)$. Finally we have :
\begin{eqnarray*}
A(t,x,y) &=& \varepsilon H_{y}(y)x + \left( \lambda - K_{y}(y)\right) t + F(x,y),\\
B(t,x,y) &=& \frac{\lambda}{2}x + H(t,y),\\
C(t,x,y) &=& - \varepsilon H_{t}(y)x+K(y).
\end{eqnarray*}
Thus, condition $(C"5)$, gives us
\begin{equation*}
2H_y(y) - \varepsilon f H_t(y) + F_x(x,y)=0.
\end{equation*}
Using the condition (C"6), we obtain :
\begin{eqnarray*}
X(f) + 2\varepsilon H_{yy}(y)x - 2K_{yy}(y)t + 2F_{y}(x,y) 
+ f\left( 2K_{y}(y)+\lambda\right) &=& 0.
\end{eqnarray*}
Compute $X(f)$, we have :
\begin{eqnarray*}
X(f) = \left[ -\varepsilon H_{t}(y)x + K(y)\right] f_y. 
\end{eqnarray*}
Then
\begin{eqnarray*}
\left( -\varepsilon H_{t}(y)x + K(y)\right)f_y &+& 2\varepsilon H_{yy}(y)x 
-2K_{yy}(y)t \\
&+& 2F_{y}(x,y) + f\left( 2K_{y}(y) + \lambda\right) =0.   
	\end{eqnarray*}
\end{proof}

We have the following result:
\begin{corollary}
Let $(M, g_f)$ is strict Walker manifold three-dimensional, where $g_f$ is the metric given by \eqref{eq2.1} and $f$ is the defining function given by 
$f(x,y)=a(x) y^{2}+b(x) y+d(x)$.  Then $(M, g_f, X, \lambda)$ is a Ricci soliton if 
equation \eqref{eq3.1} is satisfied by $X = A\partial_t+ B\partial_x + C \partial_y $ 
with $A, B $ and $C$ are given by
\begin{eqnarray}\label{eq3.12}
A(t,x, y) &=& \left( \lambda - K_y(y)\right)t+\varepsilon H_{y}(y) +N(x,y), \nonumber\\ 
B(t,x,y) &=&\frac{\lambda}{2}x + H(t,y),\nonumber\\
C(t,x,y) &=& -\varepsilon H_{t}(y)x+K(y),
\end{eqnarray}
where $H,K,N$ satisfy the following conditions
\begin{align*}
&	- \varepsilon a(x)H_{t}(t,y) y^{2} - \varepsilon b(x)H_{t}(t,y) y\\
&-\varepsilon d(x)H_{t}(t,y)+\varepsilon H_{y}(t,y)+N_{x}(x,y)=0,
\end{align*}
 and
 \begin{align*}
&&\left(-\varepsilon H_{t}(y)x+K(y)\right) f_y+f\left( 2K_y(y) +\lambda
-2\varepsilon H_{ty}(y)x\right)  \\
 && + 4\left( \varepsilon H_{yy}(y) + N_y(x,y)-K_{yy}(y)t\right) 
 - \frac{1}{2\varepsilon}f_{xx}=0.
 \end{align*}	
\end{corollary}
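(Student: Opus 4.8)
The plan is to run the same argument as in the proof of Theorem~3.1, exploiting the fact that for a strict Walker metric the defining function does not depend on $t$. Writing $f(x,y) = a(x)y^2 + b(x)y + d(x)$ we have $f_t = 0$, hence $f_{tt} = f_{tx} = 0$. Substituting $X = A\partial_t + B\partial_x + C\partial_y$ into \eqref{eq3.1} and using \eqref{eq3.2} together with the Ricci components \eqref{eq2.4}, the six scalar conditions (C1)--(C6) from the proof of Theorem~3.1 specialise to $C_t = 0$; $\varepsilon B_t + C_x = 0$; $A_t + C_y + fC_t = \lambda$; $2\varepsilon B_x = \varepsilon\lambda$; $\varepsilon B_y + fC_x + A_x = 0$; and $X(f) + 2(fC_y + 2A_y) - \frac{1}{2\varepsilon}f_{xx} = \lambda f$, since every term involving $f_{tt}$ or $f_{tx}$ drops out.

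Next I would solve these in the same order as in Theorem~3.1. Condition (C4) integrates to $B(t,x,y) = \frac{\lambda}{2}x + H(t,y)$; condition (C1) gives $C = C(x,y)$; feeding $B$ into (C2) yields $C_x = -\varepsilon H_t$, and since the left-hand side is independent of $t$ so is $H_t$, so integration in $x$ gives $C(t,x,y) = -\varepsilon H_t(y)x + K(y)$. Putting $C$ and $f_t = 0$ into (C3) gives $A_t = \lambda - K_y(y) + \varepsilon H_{ty}(y)x$; the same independence-of-$t$ argument forces $H_{ty} = 0$, i.e.\ $H_y = H_y(y)$, and integration in $t$ yields $A(t,x,y) = (\lambda - K_y(y))t + \varepsilon H_y(y) + N(x,y)$, which is \eqref{eq3.12}. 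Substituting \eqref{eq3.12} and $a_t = b_t = d_t = 0$ into (C5) gives at once the first displayed condition on $H, K, N$. Finally, since $f_t = 0$ we have $X(f) = Cf_y = (-\varepsilon H_t(y)x + K(y))f_y$; inserting this together with $C_y = -\varepsilon H_{ty}(y)x + K_y(y)$ and $A_y = \varepsilon H_{yy}(y) + N_y(x,y) - K_{yy}(y)t$ into (C6) and collecting terms produces the second displayed condition.

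Equivalently, the corollary can be read off directly from Theorem~3.1 by setting $a_t = b_t = d_t = 0$ throughout: the term $-\frac{1}{2} a_t(t)y^2$ disappears from $A$, and the terms $\frac{1}{2} a_{tx}y^2 + \frac{1}{2} b_{tx}y + \frac{1}{2} d_{tx}$, $f_t$, $\frac{1}{2} f_{tt}$ and $a_t y$ disappear from the two constraints, leaving exactly \eqref{eq3.12} and the two displayed conditions. Either way the computation is entirely routine; the only delicate point is the bookkeeping of the ``integration constants'' --- each time one integrates one of (C1)--(C6) in a variable, the function of the remaining variables that appears must be re-checked against the conditions already used, and it is this consistency requirement that forces $H_t$ and $H_y$ to depend on $y$ alone. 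I will follow the sign and coefficient conventions of the proof of Theorem~3.1 (in particular the coefficient $4$ in front of $A_y$ in (C6)).
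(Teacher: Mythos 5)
The paper gives no proof for this corollary, but your strategy --- rerunning the (C1)--(C6) analysis of Theorem~3.1 with $f_t=f_{tt}=f_{tx}=0$, or equivalently reading the result off Theorem~3.1 by setting $a_t=b_t=d_t=0$ --- is exactly the specialization the authors intend, and the derivations of $B$, $C$, $A$ and of the first displayed condition from (C5) are correct and match the paper's treatment of the preceding corollaries.

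There is, however, one step that is not justified as written. You claim that ``since $f_t=0$ we have $X(f)=Cf_y$.'' The hypothesis $f_t=0$ only kills the term $Af_t$; for a strict Walker metric with $f(x,y)=a(x)y^2+b(x)y+d(x)$ the derivative $f_x=a'(x)y^2+b'(x)y+d'(x)$ is generically nonzero, so in fact $X(f)=Bf_x+Cf_y$, and condition (C6) should carry the extra term $\bigl(\tfrac{\lambda}{2}x+H(t,y)\bigr)f_x$. The corollary's second displayed condition omits this term, and so does the final display in the proof of Theorem~3.1 (even though $Bf_x$ appears in the intermediate step there), so you are faithfully reproducing the paper's bookkeeping --- but a self-contained proof must either include $Bf_x$ or explain why it vanishes, and ``$f_t=0$'' is not a valid reason. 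A smaller point: your appeal to ``the same independence-of-$t$ argument'' to get $H_{ty}=0$ should be made explicit; the actual mechanism (as in the paper's proof of the flat case) is to differentiate (C3) in $x$ and (C5) in $t$ and combine, which does go through here because $f_t=0$, and without $H_{ty}=0$ the integration of (C3) would produce an extra term $\varepsilon H_{ty}(y)xt$ in $A$ that is absent from \eqref{eq3.12}.
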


\section*{Acknowledgments}
The authors would like to thank the referee for his/her valuable suggestions and comments that helped them improve the paper. This paper was completed at a
time when the authors was visiting the African Institute for Mathematical Sciences
(AIMS-Senegal) at Mbour.

\end{document}